\newtheorem{theorem}{Theorem}[section] 
\newtheorem{lemma}[theorem]{Lemma}
\newtheorem{proposition}[theorem]{Proposition}
\newtheorem{corollary}[theorem]{Corollary}
\theoremstyle{definition}
\newtheorem{definition}[theorem]{Definition}
\theoremstyle{remark}
\numberwithin{equation}{section}
\DeclareMathOperator{\Lip}{Lip}
\DeclareMathOperator{\dist}{dist}
\DeclareMathOperator{\id}{id}
\title[Growth rate of Lipschitz constants]{Growth rate of Lipschitz constants for retractions between finite subset spaces}
\author{Earnest Akofor}
\address{215 Carnegie, Mathematics Department, Syracuse University, Syracuse, NY 13244, USA}
\email{eakofor@syr.edu}
\author{Leonid V. Kovalev}
\address{215 Carnegie, Mathematics Department, Syracuse University, Syracuse, NY 13244, USA}
\email{lvkovale@syr.edu}
\thanks{L.V.K. supported by the National Science Foundation grant DMS-1764266.}
\subjclass[2020]{Primary 54E40; Secondary 46B20, 54B20, 54C15}
\keywords{normed space, metric space, Hadamard space, finite subset space, Lipschitz retraction}
\begin{document}
\baselineskip5.7mm

\maketitle

\begin{abstract}
For any metric space $X$, finite subset spaces of $X$ provide a sequence of isometric embeddings $X=X(1)\subset X(2)\subset\cdots$. The existence of Lipschitz retractions $r_n\colon X(n)\to X(n-1)$ depends on the geometry of $X$ in a subtle way. Such retractions are known to exist when $X$ is an Hadamard space or a finite-dimensional normed space. But even in these cases it was unknown whether the sequence $\{r_n\}$ can be uniformly Lipschitz. We give a negative answer by proving that $\Lip(r_n)$ must grow with $n$ when $X$ is a normed space or an Hadamard space.
\end{abstract}

\section{Introduction}
 
Given a topological space $X$ and a positive integer $n$, 
the nonempty subsets of $X$ of cardinality at most $n$ form another topological space $X(n)$ with a natural quotient topology induced by the map that takes each ordered tuple $(x_1,\dots,x_n)$ in the Cartesian product $X^n$ to the finite set $\{x_1,\dots, x_n\}$ in $X(n)$. The space $X(n)$ is called the \emph{$n$th finite subset space} of $X$ (the terms symmetric product or symmetric power are sometimes used as well). This concept goes back to Borsuk and Ulam~\cite{BorsukUlam}. When $X$ is a metric space, $X(n)$ becomes a metric space with respect to \emph{Hausdorff distance} which is  given by
\begin{equation}\label{def-dh}
\begin{split}
& d_H\big(\{x_1,\dots,x_n\}, \{x_1',\dots,x_n'\}\big) \\ 
& := \max\left\{\max_i\min_j d(x_i,x_j'), \max_i\min_j d(x_j,x_i') \right\}.
\end{split}
\end{equation}
See, e.g.,~\cite[Proposition 1.2.2]{Akoforthesis} for details of this metrization.
The natural embeddings $X=X(1)\subset X(2)\subset\cdots$ are isometric with respect to the Hausdorff distance.  

If $X$ and $Y$ are metric spaces, a map $f\colon X\to Y$ is called \emph{Lipschitz} if there is a number $L\geq 0$ such that $d(f(x),f(x'))\leq L d(x,x')$ for all $x,x'\in X$. The least of such numbers $L$ is denoted by $\Lip(f)$ and is called the \emph{Lipschitz constant} of $f$.  If $Y\subset X$, a map $r\colon X\to Y$ is a \emph{retraction} if its restriction to $Y$ is the identity map. If $r$ is in addition Lipschitz, it is called a \emph{Lipschitz retraction}. 
 
A Lipschitz retraction $X(n)\to X(k)$, where $k<n$, can be interpreted as a robust choice of $k$ clusters within a finite set. Indeed, given a set $A\subset X$ of cardinality $|A|\le n$, we must choose a set $r(A)$ with $|r(A)|\le k$ subject to the conditions that $r(A)=A$ if $|A|\le k$, and $r(A)$ a Lipschitz function of $A$.  

Some spaces $X$ present topological obstructions to the existence of such retractions. For example, if $X$ is the circle $S^1$, then $X(3)$ is homeomorphic to $3$-sphere~\cite{Bott} and cannot be retracted onto $X(1)=S^1$, being simply-connected. Section~\ref{sec:circle} presents a more general obstruction of this type.  
If $X$ is a Hilbert space of any dimension (finite or infinite), for every $n$ there exists a Lipschitz retraction $r_n\colon X(n)\to X(n-1)$ with $\Lip(r_n) \le \max\left(n^{3/2}, 2n-1\right)$ \cite[(2.5)]{Kovalev2016}. Question~3.2 in~\cite{Kovalev2016} and Remark~3.5 in~\cite{BacakKovalev} asked whether $\Lip(r_n)$ can be bounded independently of $n$. Our first result shows that it must grow at least linearly with respect to $n$, provided that $\dim X\ge 2$. 

\begin{theorem}\label{thm:normed}
Let $X$ be a normed space over $\mathbb R$ with $\dim X\ge 2$. Suppose that $r\colon X(n)\to X(k)$ is a Lipschitz retraction, where $1\le k\le n-1$. Then 
\begin{equation}\label{normed1}
\Lip(r)\geq \frac{kn}{2\pi(n-1)} - \frac{1}{2}.   
\end{equation} 
Moreover, if $X$ is a Hilbert space, then 
\begin{equation}\label{normed2}
\Lip(r)\geq \frac{kn}{\pi(n-1)} - 1.   
\end{equation} 
\end{theorem}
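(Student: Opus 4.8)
The plan is to pass to a two–dimensional subspace of $X$ and then to bound the stretching forced on $r$ by a topological degree (winding–number) argument, exploiting the elementary fact that a Lipschitz map carrying a short loop of configurations onto a loop of large winding number must have a large Lipschitz constant.

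\textbf{Reduction to the plane.} Fix a two–dimensional subspace $V\subseteq X$. When $X$ is Hilbert, let $P\colon X\to V$ be the orthogonal projection; acting pointwise on finite sets it is $1$–Lipschitz for the Hausdorff metrics, so $A\mapsto\{Pz:z\in r(A)\}$ is a Lipschitz retraction $V(n)\to V(k)$ with Lipschitz constant at most $\Lip(r)$, and \eqref{normed2} reduces to the case $X=V=\mathbb R^2$. In the general normed case one works inside $V$ directly (comparing the norm of $V$ with an auxiliary Euclidean one via John's theorem, and noting that $V$ carries only the reflection $z\mapsto -z$, not a rotation group); this is precisely where the factor $2$ between \eqref{normed1} and \eqref{normed2} is lost. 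So it suffices to produce a lower bound of Euclidean type, $\Lip(r)\ge \tfrac{kn}{\pi(n-1)}-1$, on the plane.

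\textbf{The loop and the degree.} Work in $\mathbb R^2\cong\mathbb C$ and let $E=\{e^{2\pi i j/n}:0\le j\le n-1\}$. Consider the closed loop $\gamma(t)=e^{2\pi i t/n}E$ (or a variant of it in which a cluster of roots is kept together so that $\gamma$ actually meets $X(k)$ at finitely many parameter values); one checks directly from the Hausdorff speed that $\operatorname{length}(\gamma)$ is a small explicit quantity — comparable to $1/n$ for the one–notch rotation, to $1$ for the full rotation $e^{2\pi i t}E$ — while the maximally symmetric configuration $E$ is turned by a definite amount along $\gamma$. The heart of the matter is that $r$ cannot follow this rotation equivariantly: there is no continuous way to collapse the $n$ roots to $\le k$ clusters compatible with the monodromy of the rotation (for $k=n-1$ the relevant fibre is a free $\mathbb Z/n$–set). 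Quantitatively, one builds a $1$–Lipschitz map $\Phi$ from a neighbourhood of the relevant configurations in $\mathbb C(k)$ to a circle — assembled from the angular coordinates (relative to the origin) of the points of a configuration, with collisions handled by passing through the resolution $z\mapsto z^2$ of $\mathbb R^2/\{\pm1\}$ — and shows that the degree of $\Phi\circ r\circ\gamma$ is, up to an additive error of size at most $\tfrac12$, at least $\tfrac{kn}{2\pi(n-1)}$; the retraction property enters through the configurations on $\gamma$ that already lie in $X(k)$ and are therefore fixed by $r$, which pin down the angular bookkeeping and force it to close up with a nonzero winding.

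\textbf{Conclusion and main obstacle.} Since a loop of degree $d$ into a circle has length at least $(\text{length of the circle})\cdot|d|$, the lower bound on the degree gives a lower bound on $\operatorname{length}(\Phi\circ r\circ\gamma)\le\Lip(\Phi)\,\Lip(r)\,\operatorname{length}(\gamma)$; dividing by $\operatorname{length}(\gamma)$ and tracking constants yields \eqref{normed2} on the plane, and the factor–$2$ loss from the reduction yields \eqref{normed1}. I expect the genuinely hard step to be the degree lower bound: $r$ is only assumed Lipschitz, is not equivariant, and maps into the non-linear space $\mathbb C(k)$, so that for $k\ge 2$ one cannot symmetrise $r$ by averaging over rotations the way one can when $k=1$. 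Making ``$r$ is forced to turn the configuration'' precise therefore requires a combinatorial–topological argument — following the points of $r(\gamma(t))$ on the strata where they are well separated, controlling their relabelling across collisions, and extracting a nonzero winding from the fixed configurations on the loop — together with the counting that produces the exact constant $\tfrac{kn}{2\pi(n-1)}$.
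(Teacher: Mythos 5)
Your reduction to the plane is essentially the paper's first step (John's theorem plus a Kadets--Snobar projection, which is where the factor $2$ between \eqref{normed1} and \eqref{normed2} comes from, and orthogonal projection in the Hilbert case), but the core of your argument --- the winding-number lower bound --- has a gap that is not repairable in the form you describe, not merely an omitted computation. By the Wu--Tuffley computation (Theorem~\ref{HomologyFSS1}), $S^1(k)$ has the homotopy type of $S^m$ where $m$ is the largest odd integer not exceeding $k$; in particular $S^1(k)$ is simply connected for every $k\ge 3$, and so is any neighbourhood of it in $\mathbb C(k)$ that deformation retracts onto it by radial projection. Any circle-valued map $\Phi$ defined on such a neighbourhood therefore kills $\pi_1$, so the degree of $\Phi\circ r\circ\gamma$ is $0$ for every loop $\gamma$ whose $r$-image stays there --- and the displacement estimate forces $r(\gamma(t))$ to stay exactly there. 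Equivalently, $H^1(S^1(k))=0$ for $k\ge 3$, so your ``sum of angular coordinates'' map cannot be made continuous across collisions in any homotopically nontrivial way; the obstruction to retracting $S^1(n)$ onto $S^1(k)$ genuinely lives in $H_m$ with $m\ge 3$, invisible to winding numbers. (For $k\le 2$ a $\pi_1$ argument has a chance, but the theorem is claimed for all $k\le n-1$.) Two further problems: on your loop $\gamma(t)=e^{2\pi it/n}E$ every configuration has exactly $n$ points, so none lies in $X(k)$ and the retraction property pins down nothing along $\gamma$; and the quantitative degree estimate producing $\frac{kn}{2\pi(n-1)}$ is asserted rather than proved, as you yourself note.

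The paper sidesteps all of this by converting the quantitative hypothesis into a purely topological contradiction rather than a quantitative degree: Lemma~\ref{lem:distancecircle} shows every $A\in S^1(n)$ is within $\pi(n-1)/(kn)$ of $S^1(k)$, so by the displacement bound of Lemma~\ref{displacement}, if $\Lip(s)$ were smaller than claimed then $s(A)$ would lie in the annulus $\{\|x\|\ge 1-c\}$ for all $A\in S^1(n)$; composing with radial projection then yields a continuous retraction $S^1(n)\to S^1(k)$, contradicting Proposition~\ref{NoContRetI}, whose proof uses the full homology in degree $m$ (including the doubling map on $H_{n-1}$ when $k=n-1$ with $n$ even). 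If you want to keep a degree-theoretic flavour, the invariant would have to be a degree in dimension $m$, which is in effect what the paper's homological obstruction is.
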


The case $\dim X=1$, when $X$ is isometric to $\mathbb R$, is covered by our second main theorem. It concerns \emph{Hadamard spaces}, which share the geometric properties of Hilbert spaces but not necessarily their linear structure. To define them, we need the notion of a geodesic space. 

A \emph{geodesic} in a metric space $(X, d)$ is a mapping 
$\gamma\colon [0, 1]\to X$ such that for all $t, s\in [0, 1]$ we have $d(\gamma(t), \gamma(s)) = |t-s| d(\gamma(0), \gamma(1))$. In geometric terms, a geodesic is a curve parametrized proportionally to its arclength. If for any two points $p, q\in X$ there exists a geodesic with $\gamma(0)=p$ and $\gamma(1)=q$, then $X$ is called a \emph{geodesic space}. 

\begin{definition}\label{def:Hadamard}
A complete geodesic space is called an \emph{Hadamard space} if for every point $z$ and every geodesic $\gamma$ we have 
\[
d(\gamma(t), z)^2 
\le (1 - t)d (\gamma(0), z)^2 + t d (\gamma(1), z)^2
- t(1 - t) d (\gamma(0), \gamma(1))^2
\]
for all $t\in [0, 1]$.
\end{definition}

We refer to~\cite{Bacak} for equivalent definitions and the motivation behind the concept of an  Hadamard space. Theorem~3.2 in~\cite{BacakKovalev} asserts that for every Hadamard space $X$ and every $n\ge 2$ there exists a Lipschitz retraction $r_n\colon X(n)\to X(n-1)$ with $\Lip(r_n)\le \max\left(2n^2+\sqrt{n}, 4n^{3/2}+1\right)$. The following theorem gives a lower bound for $\Lip(r_n)$, answering a question posed in~\cite[Remark 3.5]{BacakKovalev}. 

\begin{theorem}\label{thm:metric}
Let $X$ be either a normed space over $\mathbb R$ of dimension $\dim X\ge 1$, or 
an Hadamard space  containing more than one point. If $r\colon X(n)\to X(n-1)$ is a Lipschitz retraction, then $\Lip(r)\geq n-3$. 
\end{theorem}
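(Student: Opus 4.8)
The plan is to reduce the theorem to a one-dimensional statement and then to exploit the rigidity that the retraction property imposes on $r$ near configurations of fewer than $n$ points.

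First, in both cases $X$ contains a subset $I$ isometric to the interval $[0,1]$ that is a $1$-Lipschitz retract of $X$. For a normed space, take a one-dimensional subspace together with a norm-one functional that restricts to the identity on it, followed by the nearest-point map onto $[0,1]$; for an Hadamard space, take a geodesic segment of the appropriate length, which is convex, so nearest-point projection onto it is $1$-Lipschitz. Applying such a retraction $\lambda\colon X\to I$ pointwise yields a $1$-Lipschitz map $X(m)\to I(m)$ which fixes $I(m)$, because a $1$-Lipschitz map does not increase Hausdorff distance. Composing $r$ with these maps gives a Lipschitz retraction $I(n)\to I(n-1)$ with no larger Lipschitz constant, so it suffices to prove $\Lip(r)\ge n-3$ when $X=[0,1]$.

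Next I would use the following structural observation: if an $n$-point configuration $A\subset[0,1]$ degenerates because two of its points collide, then $A$ lies in the closure of $[0,1](n-1)$, so continuity of $r$ together with $r|_{[0,1](n-1)}=\id$ forces $r(A)$ to be the configuration obtained by merging exactly that colliding pair, up to an error bounded by $\Lip(r)$ times the distance from $A$ to the degeneration locus. Building on this, I would probe $r$ with a loop $\gamma$ in $[0,1](n)$ assembled from $n-1$ short arcs, the $j$-th arc carrying a "merge event" across the $j$-th of the $n-1$ consecutive gaps of a roughly equally spaced $n$-point configuration and returning; the aim is to show that the retraction property at the near-degenerate configurations met along $\gamma$, the constraint $|r(A)|\le n-1$, and continuity of $r\circ\gamma$ together force $r\circ\gamma$ to move, around each arc, by a definite amount compared to the size of that arc, with these motions adding up over the $n-1$ cyclically distinct merge events rather than cancelling. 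Discarding the boundary arcs that cannot be controlled then yields $\Lip(r)\ge n-3$.

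The delicate part — and the main obstacle — is genuinely global. A naive comparison of two nearby configurations only gives $\Lip(r)\ge1$: if $r$ is pinned at both, then their closeness forces closeness of the pinned values, so the estimate cannot be localised. To force a lower bound of order $n$ one must prevent $r$ from cheaply rerouting a merge event along $\gamma$, for instance by momentarily dropping to a configuration of $n-2$ points. This calls for a careful study of the local structure of $[0,1](n)$ near $[0,1](n-1)$, which is not a manifold once $n\ge4$: along a configuration of $n-1$ distinct points, the $n-1$ "sheets" of $[0,1](n)$ — one for each way of inserting an extra point near one of the $n-1$ points — are all glued together along $[0,1](n-1)$, and the gluing becomes still more intricate near the lower stratum where three points coincide. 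One needs a quantitative rigidity lemma, uniform along $\gamma$, describing how a Lipschitz retraction must respect this structure up to an error governed by $\Lip(r)$. Proving that lemma and arranging the bookkeeping so that only the additive constant $3$ is lost is where the substance of the argument lies.
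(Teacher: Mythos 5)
Your reduction to the interval is correct and coincides with what the paper does: the maps $f$ and $g$ you describe are exactly those used in the paper's Corollary~\ref{cor:metric} (via Lemma~\ref{retracts-inherit}), so it does indeed suffice to bound $\Lip(r)$ from below when $X=[0,1]$. The gap is that you then stop short of proving the one-dimensional estimate, which is the entire content of the theorem. Your plan --- probe $r$ with a loop of ``merge events'' and show that the resulting displacements ``add up rather than cancel'' --- has no mechanism behind it as stated: $I(n)$ is contractible (contract it via $A\mapsto\{(1-t)a\colon a\in A\}$), so every loop in $I(n)$ is null-homotopic and no degree- or winding-number-type argument run inside $I(n)$ can by itself produce an obstruction; the cancellation you need to rule out is exactly what contractibility permits. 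The ``quantitative rigidity lemma'' you defer is not bookkeeping; it is the theorem, and you give no indication of where the topological obstruction that must underlie it would come from.

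The paper finds that obstruction not in loops but in the pinned subset spaces $\mathcal D^{m}=I(m+2,\{0,1\})$: by Andersen--Marjanovi\'c--Schori (Theorem~\ref{dunce}), $\mathcal D^m$ is contractible for even $m$ and homotopy equivalent to $S^m$ for odd $m$, so $\mathcal D^{m}$ does not retract onto $\mathcal D^{m-1}$ when $m$ is even. Every $A\in\mathcal D^{n-2}$ has minimum separation at most $1/(n-1)$, so if $\Lip(r)+1<n-1$ the displacement bound of Lemma~\ref{displacement} (combined with Lemma~\ref{distance}) keeps $r(A)$ within $c/2<1/2$ of $A$; rescaling $r(A)$ by the affine map sending $[\min r(A),\max r(A)]$ back to $[0,1]$ then yields a genuine Lipschitz retraction $\mathcal D^{n-2}\to\mathcal D^{n-3}$, which is impossible when $n$ is even (the odd case requires an extra pinned point and a truncation step, costing one more unit in the constant). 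If you want to salvage your approach, you must identify a subspace of $I(n)$ carrying nontrivial topology relative to $I(n-1)$ and show that a retraction with small Lipschitz constant can be corrected to respect it --- which is precisely what the pinned spaces accomplish.
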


The paper is organized as follows. In Section~\ref{sec:circle} we collect the necessary results from the algebraic topology of finite subset spaces. Section~\ref{sec:displacement} contains preliminary results on the properties of Lipschitz retractions. Theorems~\ref{thm:normed} and~\ref{thm:metric} are proved in sections~\ref{sec:normed} and~\ref{sec:metric}, respectively. Corollary~\ref{cor:metric} provides a more general version of Theorem~\ref{thm:metric}. Note that Theorem~\ref{thm:metric} gives a slightly better lower bound than Theorem~\ref{thm:normed}. On the other hand, Theorem~\ref{thm:normed} applies to retractions onto $X(k)$ for any $k<n$, not only $k=n-1$.

\section{Topology of finite subset spaces}\label{sec:circle}

Let $H_n(X)$ denote the $n$th singular homology group of a topological space $X$~\cite[p. 108]{Hatcher}. The homology groups of finite subset spaces $S^1(n)$ were computed by Wu~\cite{Wu} and their homotopy type was determined by Tuffley~\cite{Tuffley}, see also~\cite{ChinenKoyama}. 

\begin{theorem}\label{HomologyFSS1} (\cite[Theorem III]{Wu}, \cite[Theorems 4--5]{Tuffley}). Given $n\in \mathbb N$, let $m$ be the largest odd integer not exceeding $n$. The homology groups $H_0(S^1(n))$ and $H_m(S^1(n))$ are isomorphic to $\mathbb Z$, and all other homology groups of $S^1(n)$ are trivial. Moreover, when $n$ is odd, the inclusion $S^1(n)\subset S^1(n+1)$ induces the doubling map $k\mapsto 2k$ on the homology group $H_{n}(S^1(n))$.
\end{theorem}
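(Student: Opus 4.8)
The plan is to derive Theorem~\ref{HomologyFSS1} from the stronger topological fact, due to Tuffley, that $S^1(n)$ is homotopy equivalent to the sphere $S^m$. Granting this, the homology groups are read off from $H_\ast(S^m)$, which is $\mathbb Z$ in degrees $0$ and $m$ and vanishes otherwise. For the final assertion one notes that when $n$ is odd, $n$ is the largest odd integer not exceeding either $n$ or $n+1$, so that $S^1(n)$ and $S^1(n+1)$ are both homotopy equivalent to $S^n$; the inclusion $S^1(n)\hookrightarrow S^1(n+1)$ then represents an element of $[S^n,S^n]\cong\mathbb Z$, and the point is to show that this degree equals $2$.

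To control the homotopy type I would induct on $n$ using the filtration $S^1(1)\subset S^1(2)\subset\cdots$. The stratum $S^1(k)\setminus S^1(k-1)$ is the space of $k$-point subsets of $S^1$; recording such a subset by a cyclically ordered tuple together with its vector of gap lengths identifies this stratum with $(S^1\times\mathring\Delta^{k-1})/(\mathbb Z/k)$, a $k$-manifold that is homotopy equivalent to a circle (it deformation retracts onto the regular $k$-gons). The substantive step is to understand how the strata are glued as gaps close up; a convenient way to package this is a recursion relating $S^1(n)$ to $S^1(n-2)$: up to homotopy, $S^1(n)$ is the join $S^1(n-2)\ast S^1$ --- or, what suffices, there is a cofibre sequence with the same effect --- and these identifications can be chosen to be natural with respect to the inclusion maps. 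Iterating from the base cases $S^1(1)=S^1$ and $S^1(2)=$ the M\"obius band $\simeq S^1$ yields $S^1(n)\simeq (S^1)^{\ast\lceil n/2\rceil}=S^{2\lceil n/2\rceil-1}=S^m$. Alternatively, following Wu, one can avoid the homotopy type altogether and compute $H_\ast(S^1(n))$ directly from the cellular chain complex of an explicit CW decomposition of $S^1(n)$ whose cells are indexed by cyclic gap-patterns.

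With the recursion available, the doubling statement follows by the same induction: the inclusion $S^1(n)\hookrightarrow S^1(n+1)$ corresponds, up to homotopy, to the join of $\id_{S^1}$ with the inclusion $S^1(n-2)\hookrightarrow S^1(n-1)$, and joining a map with the identity leaves unchanged its degree on top-dimensional homology; one is thereby reduced to the base case $n=1$, where $S^1(1)$ sits inside $S^1(2)$ as the boundary circle of the M\"obius band, and the boundary of a M\"obius band represents twice the generator of its first homology, so $\iota_\ast$ is multiplication by $2$. (The companion fact, not needed here, that $S^1(k)\hookrightarrow S^1(k+1)$ is null-homotopic for even $k$ likewise falls out, since joining the identity with a null map is null.) I expect the main obstacle to be establishing the recursion itself: this calls for a precise description of how subsets of cardinality $n-1$ and $n$ degenerate, and a verification that the attaching maps assemble into the claimed join, which is exactly the geometric analysis carried out by Tuffley through his cyclic-gap CW complex. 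Keeping careful track of orientations in that step is the only delicate point; the rest is formal.
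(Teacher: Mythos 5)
The paper does not prove Theorem~\ref{HomologyFSS1}: it is quoted from Wu and Tuffley and used as a black box, so there is no in-paper argument to compare yours against. Judged on its own terms, your sketch gets all of the formal bookkeeping right: reading the homology off a homotopy equivalence $S^1(n)\simeq S^m$; the fact that $\deg(f\ast\id_{S^1})=\deg(f)$ because the join induces the shifted tensor product on reduced homology; the identification of the stratum of exactly-$k$-point subsets with $(S^1\times\mathring\Delta^{k-1})/(\mathbb Z/k)\simeq S^1$; and the base case, since $S^1(2)$ is a M\"obius band whose boundary circle is the diagonal $S^1(1)$, hence represents twice the core generator of $H_1$.

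The gap is the recursion everything rests on: the claim that, up to homotopy, $S^1(n)\simeq S^1(n-2)\ast S^1$ \emph{naturally} with respect to the inclusions $S^1(n)\hookrightarrow S^1(n+1)$. Nothing in your sketch constructs such a join structure. As a bare homotopy equivalence the claim is vacuous (both sides are $S^m$ once the theorem is known), and the naturality you actually need --- that $\iota_n$ agrees up to homotopy with $\iota_{n-2}\ast\id_{S^1}$ --- is, via $[S^m,S^m]\cong\mathbb Z$, essentially equivalent to the degree-doubling statement being proved; asserting it without an explicit geometric construction makes the key step circular. You flag this yourself and defer to ``the geometric analysis carried out by Tuffley,'' but that analysis (which proceeds through finite subset spaces of an arc and an explicit cell structure, not through a join decomposition) is the entire content of the theorem, while the steps you supply around it are routine. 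So the proposal is a plausible account of why the theorem should hold, but as written it reduces the cited result to an unproved structural claim rather than proving it.
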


The homology presents an obstruction to the existence of continuous retractions between the finite subset spaces of the circle. 

\begin{proposition}\label{NoContRetI}
If $1\leq k\leq n-1$, there is no continuous retraction $S^1(n)\to S^1(k)$.
\end{proposition}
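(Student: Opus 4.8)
The plan is to argue by contradiction using singular homology and the computation recorded in Theorem~\ref{HomologyFSS1}. Suppose $r\colon S^1(n)\to S^1(k)$ is a continuous retraction and let $\iota\colon S^1(k)\hookrightarrow S^1(n)$ be the inclusion, so that $r\circ\iota=\id_{S^1(k)}$. Applying the homology functor $H_*(\,\cdot\,)$ yields $r_*\circ\iota_*=\id$ on $H_*(S^1(k))$, so $\iota_*\colon H_j(S^1(k))\to H_j(S^1(n))$ must be a split injection for every $j$. I would then let $m$ be the largest odd integer not exceeding $k$ and seek the contradiction in degree $j=m$, where Theorem~\ref{HomologyFSS1} gives $H_m(S^1(k))\cong\mathbb Z$; note $m\ge 1$ since $k\ge 1$.

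Next I would split into two cases according to the size of $n$. If $n\ge m+2$, then $m$ is strictly smaller than the largest odd integer not exceeding $n$, and since $m\ne 0$ Theorem~\ref{HomologyFSS1} gives $H_m(S^1(n))=0$; this is incompatible with $\iota_*$ being injective on $H_m(S^1(k))\cong\mathbb Z$. The remaining case is $n\le m+1$. Combined with $m\le k\le n-1$, these inequalities force $k\le m$ and hence $k=m$ (so $k$ is odd), and then $n=m+1=k+1$.

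In this remaining case the inclusion is $S^1(k)\subset S^1(k+1)$ with $k$ odd, and the last sentence of Theorem~\ref{HomologyFSS1} identifies the induced homomorphism $\iota_*\colon H_k(S^1(k))\to H_k(S^1(k+1))$ with the doubling map $\mathbb Z\to\mathbb Z$, $a\mapsto 2a$. But a doubling map on $\mathbb Z$ has no left inverse: any homomorphism $\rho\colon\mathbb Z\to\mathbb Z$ satisfies $\rho(2a)=2\rho(a)$, so $\rho\circ\iota_*=\id$ would force $2\rho(1)=1$, which is impossible. This contradicts the existence of $r_*$ and completes the argument.

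I would expect the only delicate point to be the case in which $k$ and $n$ share the same ``top odd degree'' $m$: there $H_m$ is $\mathbb Z$ on both sides, so a naive rank count says nothing, and one genuinely needs the doubling-map statement from Theorem~\ref{HomologyFSS1} to conclude. Everything else is routine functoriality of homology together with bookkeeping of which odd integers lie below $k$ and below $n$.
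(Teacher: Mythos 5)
Your argument is correct and follows essentially the same route as the paper's proof: functoriality of homology makes the inclusion-induced map injective, the degree-$m$ computation from Theorem~\ref{HomologyFSS1} forces $k=m=n-1$ with $n$ even, and the doubling map on $H_{n-1}$ rules out that last case. Your explicit case split on $n\ge m+2$ versus $n\le m+1$ is just a slightly more detailed bookkeeping of the same step the paper phrases as ``$m$ must be the greatest odd integer not exceeding $n$.''
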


\begin{proof} Suppose there exists a continuous retraction $r\colon S^1(n)\to S^1(k)$. The map induced by $r$ on the homology groups of $S^1(n)$ is a left inverse of the map induced by the inclusion of $S^1(k)$ into $S^1(n)$~\cite[p. 111]{Hatcher}.  In particular, the latter map is injective.

Let $m$ be the greatest odd integer not exceeding $k$. Since $H_m(S^1(k))$ is isomorphic to $\mathbb Z$, the group $H_m(S^1(n))$ must be nontrivial. Theorem~\ref{HomologyFSS1} implies that $m$ is the greatest odd integer not exceeding $n$. This is only possible if $n$ is even and $k = m = n-1$. However, in this case the inclusion of $S^1(n-1)$ into $S^1(n)$ induces the doubling map on the homology groups $H_{n-1}$, and this map does not have a left inverse, contradicting the previous paragraph. 
\end{proof}

The obstruction presented by Proposition~\ref{NoContRetI} is the basis of our proof of Theorem~\ref{thm:normed}. To prove Theorem~\ref{thm:metric} we need to find some topological obstruction within the subsets of $\mathbb R$. It is provided by \textit{pinned finite subset spaces}, which are defined as follows.

\begin{definition} Given a set $U\in X(n)$, the \emph{$U$-pinned finite subset space} of $X$ is
\[
X(n, U) := \{A\in X(n) \colon U\subset A \}.
\]
\end{definition}

The space $X(n, U)$ is empty when $n<|U|$. Notable examples of pinned finite subset spaces include  
\begin{equation}\label{def:dunce}
 \mathcal D^n := I(n+2, \{0, 1\})   
\end{equation}
where $I$ is the interval $[0, 1]$ and $n=0, 1, \dots$. The studies of $\mathcal D^n$ go back to Schori~\cite{Schori} who proved that $I(n)$ is a double cone of $\mathcal D^{n-2}$ when $n\ge 2$. Andersen, Marjanovi\'{c} and Schori~\cite{AMS} called the spaces $\mathcal D^n$ with even $n$ ``higher-dimensional dunce hats'' because  $\mathcal D^2$ is homeomorphic to the ``dunce hat'' space introduced by Zeeman~\cite{Zeeman} and $\mathcal D^n$  shares some features of $\mathcal D^2$ when $n$ is even.  

\begin{theorem}\label{dunce} \cite[Theorem 3.4]{AMS}. When $n$ is even, $\mathcal D^n$ is contractible. When $n$ is odd, $\mathcal D^n$ has the homotopy type of $S^n$.
\end{theorem}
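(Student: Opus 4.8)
The plan is to realize $\mathcal D^n$ as a CW complex with exactly one cell in each dimension $0,1,\dots,n$, compute its cellular chain complex and its fundamental group, and then invoke the Hurewicz and Whitehead theorems. For $0\le j\le n$ let $E_j\subset\mathcal D^n$ be the subspace of sets having at most $j$ points in the open interval $(0,1)$, so that $E_0=\{\{0,1\}\}$ is a point and $E_n=\mathcal D^n$. First I would examine the map $\bar\phi_j\colon \Delta^j\to\mathcal D^n$ from the simplex $\Delta^j=\{(t_1,\dots,t_j)\colon 0\le t_1\le\cdots\le t_j\le 1\}$ given by $(t_i)\mapsto\{0,1,t_1,\dots,t_j\}$: it is continuous for the Hausdorff metric, restricts to a homeomorphism of the open simplex onto $E_j\setminus E_{j-1}$, and carries $\partial\Delta^j$ onto $E_{j-1}$. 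Since $\Delta^j$ is compact and $\mathcal D^n$ is Hausdorff, $\bar\phi_j$ and $\bar\phi_j|_{\partial\Delta^j}$ are closed, hence quotient, maps, so $E_j$ is obtained from $E_{j-1}$ by attaching a single $j$-cell $e^j$ along $\bar\phi_j|_{\partial\Delta^j}$, and $\mathcal D^n$ carries the corresponding CW structure.

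Next I would compute the cellular boundary maps $d_j\colon\mathbb Z\to\mathbb Z$. The $j+1$ facets of $\Delta^j$ are $\{t_1=0\}$, $\{t_i=t_{i+1}\}$ for $1\le i\le j-1$, and $\{t_j=1\}$; on each of them the associated set has at most $j-1$ interior points, and $\bar\phi_j$ restricted to that facet becomes, after deleting the redundant coordinate, the characteristic map of $e^{j-1}$. Orienting $\Delta^j$ by its vertices $v_0,\dots,v_j$ (with $v_i$ consisting of $j-i$ zeros followed by $i$ ones) and identifying which facet is cut out by the vanishing of the $i$th barycentric coordinate, one checks that in every case the induced identification of that facet with $e^{j-1}$ is orientation preserving, so $d_j=\sum_{i=0}^{j}(-1)^i$; that is, $d_j=0$ for $j$ odd and $d_j=1$ for $j$ even. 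Reading off homology from this complex gives $\widetilde H_*(\mathcal D^n)=0$ when $n$ is even and $\widetilde H_*(\mathcal D^n)\cong\widetilde H_*(S^n)$ when $n$ is odd. For $\pi_1$ when $n\ge 2$, cells of dimension $\ge 3$ do not affect $\pi_1$, so $\pi_1(\mathcal D^n)=\pi_1(E_2)$; here $E_2$ is a point with one $1$-cell $a$ and one $2$-cell attached along a loop whose class in $\pi_1(E_1)=\pi_1(S^1)\cong\mathbb Z$ is $d_2\cdot a=a$, whence $\pi_1(\mathcal D^n)=\langle a\mid a\rangle$ is trivial.

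Finally I would finish by cases. For $n=0$, $\mathcal D^0$ is a point, and for $n=1$ the CW description gives $\mathcal D^1=E_1\cong S^1$ directly. For even $n\ge 2$, $\mathcal D^n$ is simply connected with vanishing reduced homology, hence weakly contractible, hence, being a CW complex, contractible. For odd $n\ge 3$, $\mathcal D^n$ is simply connected with $H_*(\mathcal D^n)\cong H_*(S^n)$, so Hurewicz gives $\pi_n(\mathcal D^n)\cong H_n(\mathcal D^n)\cong\mathbb Z$; choosing $f\colon S^n\to\mathcal D^n$ representing a generator, naturality of the Hurewicz map shows $f_*$ is an isomorphism on every homology group, so by Whitehead's theorem $f$ is a homotopy equivalence.

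The step I expect to be the main obstacle is the orientation bookkeeping in the computation of $d_j$: one must make sure the simplicial sign $(-1)^i$ of the $i$th facet is not cancelled by an extra sign coming from the reparametrization that identifies that facet with $e^{j-1}$. A secondary, largely routine, point is the point-set verification that the subspace topology on each $E_j$ agrees with the attaching-map topology, which follows from compactness of the simplices together with metrizability of $\mathcal D^n$.
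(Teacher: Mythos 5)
The paper does not prove this statement; it is quoted verbatim from Andersen--Marjanovi\'c--Schori \cite[Theorem 3.4]{AMS}, so there is no internal proof to compare against. Your argument is a correct, self-contained proof, and it is essentially the standard one for these spaces (it is also close in spirit to Tuffley's treatment of $S^1(n,\{1\})\cong\mathcal D^{n-1}$). The one step you flag as the main risk --- the orientation bookkeeping --- does work out, and there is a clean way to see it: with $v_i=(0^{j-i},1^i)$ the facet $\{t_j=1\}$ is the face opposite $v_0$, the facet $\{t_{i}=t_{i+1}\}$ is the face opposite $v_{j-i}$, and $\{t_1=0\}$ is the face opposite $v_j$; in each case the induced vertex ordering of the facet maps \emph{order-preservingly} onto the vertex ordering $w_0,\dots,w_{j-1}$ of $\Delta^{j-1}$, and $\bar\phi_j$ restricted to that facet equals $\bar\phi_{j-1}$ precomposed with this order-preserving identification. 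Thus your cell structure is in fact a $\Delta$-complex with exactly one simplex in each dimension, and the boundary formula $d_j=\sum_{i=0}^j(-1)^i$ follows directly from the simplicial boundary operator, with no separate local-degree computation needed. The remaining point-set matter is handled exactly as you say: $\bar\phi_j$ is a closed surjection from a compact space to a metric (hence Hausdorff) space, so the continuous bijection from the adjunction space $E_{j-1}\cup_{\bar\phi_j|\partial\Delta^j}\Delta^j$ onto $E_j$ is a homeomorphism. The endgame (dunce-hat relation $aaa^{-1}$ killing $\pi_1$, Hurewicz, and the homology Whitehead theorem for simply connected CW complexes, with the low cases $n=0,1$ done by hand) is correct as written.
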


As with the ordinary finite subset spaces, we have natural inclusions $X(k, U)\subset X(n, U)$ when $|U|\le k\le n$. For example, $\mathcal D^0\subset \mathcal D^1\subset \cdots$. 

\begin{corollary}\label{NoContRetII}
When $n$ is even, there is no continuous retraction of $\mathcal D^n$ onto $\mathcal D^{n-1}$.
\end{corollary}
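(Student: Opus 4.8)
The plan is to mimic the proof of Proposition~\ref{NoContRetI}, replacing the homology of $S^1(n)$ with the homotopy-type information about $\mathcal D^n$ from Theorem~\ref{dunce}. Suppose, for contradiction, that $n$ is even and $\rho\colon \mathcal D^n\to\mathcal D^{n-1}$ is a continuous retraction. Since $n-1$ is odd, Theorem~\ref{dunce} tells us that $\mathcal D^{n-1}$ has the homotopy type of $S^{n-1}$, so its $(n-1)$st homology group $H_{n-1}(\mathcal D^{n-1})$ is isomorphic to $\mathbb Z$. On the other hand, $n$ is even, so the same theorem says $\mathcal D^n$ is contractible, whence $H_{n-1}(\mathcal D^n)=0$ (we may assume $n\ge 2$, since $\mathcal D^0=\{[0,1]\}$ is a single point and there is nothing to retract when $n=0$; the case $n=0$ can either be excluded or handled trivially).

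Next I would run the standard retraction argument on homology. Writing $\iota\colon \mathcal D^{n-1}\hookrightarrow \mathcal D^n$ for the inclusion, the relation $\rho\circ\iota=\id_{\mathcal D^{n-1}}$ gives, after applying the functor $H_{n-1}(-)$, the identity $\rho_*\circ\iota_* = \id$ on $H_{n-1}(\mathcal D^{n-1})$~\cite[p. 111]{Hatcher}. Hence $\iota_*\colon H_{n-1}(\mathcal D^{n-1})\to H_{n-1}(\mathcal D^n)$ is injective. But its target $H_{n-1}(\mathcal D^n)$ is trivial while its source is isomorphic to $\mathbb Z$, so $\iota_*$ cannot be injective — a contradiction. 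Therefore no such retraction exists.

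The argument is short and the only thing that needs care is making sure the hypotheses of Theorem~\ref{dunce} are applied with the correct parities: the retraction is onto $\mathcal D^{n-1}$ with $n$ even, so the domain (index $n$, even) is contractible and the codomain (index $n-1$, odd) is a homotopy sphere of the matching dimension $n-1$ — precisely the dimension in which the two homology groups disagree. I do not anticipate a genuine obstacle here; the main point to verify is simply that the inclusion $\mathcal D^{n-1}\subset\mathcal D^n$ used is the natural one $I(n+1,\{0,1\})\subset I(n+2,\{0,1\})$ already introduced in the paragraph preceding the corollary, so that functoriality of singular homology applies verbatim.
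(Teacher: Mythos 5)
Your proof is correct and is essentially identical to the paper's: both deduce from Theorem~\ref{dunce} that $H_{n-1}(\mathcal D^{n-1})\cong\mathbb Z$ while $H_{n-1}(\mathcal D^{n})=0$, and conclude via the standard fact that a retraction would force the inclusion-induced map on homology to be injective. Your version just spells out the functoriality step that the paper leaves implicit (and your aside that $\mathcal D^0$ is the single point $\{\{0,1\}\}$, not $\{[0,1]\}$, is a harmless slip).
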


\begin{proof} By Theorem~\ref{dunce}, the homology group $H_{n-1}(\mathcal D^{n-1})$ is isomorphic to $\mathbb Z$ while $H_{n-1}(\mathcal D^{n})$ is trivial. Since the former group does not embed in the latter, $\mathcal D^{n-1}$ is not a retract of $\mathcal D^{n}$.
\end{proof}

Corollary~\ref{NoContRetII} is a less complete result than Proposition~\ref{NoContRetI}. We do not know if $\mathcal D^n$ retracts onto $\mathcal D^{n-1}$ when $n$ is odd. An interesting related question is whether each higher-dimensional dunce hat $\mathcal D^{2k}$ is an \emph{absolute Lipschitz retract}, meaning that it is a Lipschitz retract in any metric space that contains it. At present it is not known whether finite subset spaces inherit the absolute Lipschitz retract property: see~\cite{Akofor, Akoforthesis, Kovalev2015} for partial results. 

Another example of a pinned finite subset space is $S^1(n, \{1\})$ which was studied by Tuffley~\cite[p. 1131]{Tuffley}. This space is homeomorphic to $\mathcal D^{n-1}$. More specificially, the map $A\mapsto \{e^{2\pi it}\colon t\in A\}$ is a bi-Lipschitz homeomorphism of $\mathcal D^{n-1}$ onto $S^1(n, \{1\})$: see the proof of Theorem~3.1 in~\cite{BIY}.   

\section{Properties of Lipschitz retractions} \label{sec:displacement}

Let $X$ be a metric space and $n\ge 2$. The \emph{minimum separation} function $\delta_n\colon X(n)\to [0, \infty)$ is defined as follows: 
\begin{equation}\label{minsep}
\delta_n(A) = \begin{cases}
\min\{d_X(p, q)\colon p, q\in A, p\ne q\}  &\text{if  } |A|=n \\ 
0 & \text{if } |A|<n
\end{cases}
\end{equation}

The importance of $\delta_n$ stems from the following observation. 

\begin{lemma}\label{distance}
Suppose $X$ is a metric space and $n\ge 2$. For each $A\in X(n)$ there is $B\in X(n-1)$ such that $d_H(A, B)\le \delta_n(A)$. If, in addition, $X$ is a geodesic space, then the conclusion can be strengthened to $d_H(A, B)\le \frac12\delta_n(A)$.
\end{lemma}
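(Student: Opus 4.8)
The plan is to treat the two cases in the definition of $\delta_n$ separately and to produce $B$ explicitly. First I would dispose of the case $|A|<n$: here $\delta_n(A)=0$, but also $A$ itself already belongs to $X(n-1)$ since $|A|\le n-1$, so we may simply take $B=A$ and get $d_H(A,B)=0=\delta_n(A)$. This reduces everything to the case $|A|=n$, which is the only substantive one.

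So suppose $A=\{a_1,\dots,a_n\}$ with the $a_i$ distinct, and set $\delta=\delta_n(A)=\min_{i\ne j}d(a_i,a_j)$. Choose a pair $p,q\in A$, $p\ne q$, realizing this minimum, i.e. $d(p,q)=\delta$. For the first assertion (general metric space $X$) I would take $B=A\setminus\{p\}$, which has exactly $n-1$ elements and hence lies in $X(n-1)$. Since $B\subset A$, one of the two terms in the Hausdorff distance \eqref{def-dh} vanishes, and the other reduces to $\max_{a\in A}\dist(a,B)=\dist(p,B)$ because every point of $A$ other than $p$ is already in $B$. Finally $\dist(p,B)\le d(p,q)=\delta$ since $q\in B$, giving $d_H(A,B)\le\delta_n(A)$.

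For the sharper bound when $X$ is geodesic, instead of deleting $p$ I would merge $p$ and $q$ into a midpoint: pick a geodesic $\gamma$ from $p$ to $q$ and let $m=\gamma(1/2)$, so that $d(m,p)=d(m,q)=\tfrac12\delta$. Put $B=(A\setminus\{p,q\})\cup\{m\}$; this set has at most $n-1$ points, hence belongs to $X(n-1)$. Now both terms of the Hausdorff distance are bounded by $\tfrac12\delta$. Indeed, every point of $A\setminus\{p,q\}$ lies in $B$, while $p$ and $q$ are each within $\tfrac12\delta$ of $m\in B$; conversely every point of $B$ other than $m$ lies in $A$, while $m$ is within $\tfrac12\delta$ of $p\in A$. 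Thus $d_H(A,B)\le\tfrac12\delta_n(A)$, as claimed.

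I do not expect a genuine obstacle here; the argument is essentially a bookkeeping exercise with \eqref{def-dh}. The only mild subtlety is that $m$ could coincide with one of $a_3,\dots,a_n$, in which case $|B|<n-1$; but this is harmless since it only makes $B$ a smaller set (still in $X(n-1)$) and does not affect either Hausdorff estimate. One should also state at the outset that the Hausdorff distance in \eqref{def-dh}, written there for two $n$-point sets, is understood in the usual way for sets of differing cardinalities (pad the shorter tuple by repetition), so that comparing $A\in X(n)$ with $B\in X(n-1)$ makes sense.
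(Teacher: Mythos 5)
Your proposal is correct and follows essentially the same route as the paper's proof: take $B=A\setminus\{p\}$ in the general case and replace $p,q$ by the midpoint of a geodesic in the geodesic case. You simply spell out the Hausdorff-distance bookkeeping (and the harmless possibility that $m$ already lies in $A$) in more detail than the paper does.
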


\begin{proof} If $|A|<n$, the set $B=A$ satisfies the conclusion. Suppose $|A|=n$. Let $p, q\in A$ be two points such that $d_X(p, q) = \delta_n(A)$. Then the set $B=A\setminus \{p\}$ has $n-1$ elements and $d_H(A, B) \le \delta_n(A)$. 

If $X$ is a geodesic space, let $m$ be the midpoint of a geodesic from $p$ to $q$ and define $
B=(A\cup \{m\})\setminus \{p,q\} $. 
This set has $n-1$ elements and $d_H(A, B) \le \frac{1}{2}\delta_n(A)$.
\end{proof}

\begin{lemma}\label{displacement} Let $X$ be a metric space and $1\le k<n$. Suppose $r\colon X(n)\to X(k)$ is a Lipschitz retraction. Then for every $A\in X(n)$ we have
\begin{equation}\label{disp1}
d_H(r(A), A) \le (\Lip(r)+1) \dist_H(A, X(k))
\end{equation}
where $\dist_H(A, X(k)) = \inf \{d_H(A, B)\colon B\in X(k)\}$. 
\end{lemma}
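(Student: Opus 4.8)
The plan is to compare two points in $X(n)$: the set $A$ itself and a nearby set $B\in X(k)$ realizing (approximately) the distance $\dist_H(A,X(k))$, then exploit that $r$ fixes $B$. Concretely, fix $A\in X(n)$ and let $\varepsilon>0$. By definition of $\dist_H(A,X(k))$ there is $B\in X(k)$ with $d_H(A,B)\le \dist_H(A,X(k))+\varepsilon$. Since $B\in X(k)$ and $r$ is a retraction onto $X(k)$, we have $r(B)=B$. Now apply the triangle inequality in $X(n)$ together with the Lipschitz bound for $r$:
\begin{equation*}
d_H(r(A),A)\le d_H(r(A),r(B))+d_H(r(B),A)=d_H(r(A),r(B))+d_H(B,A),
\end{equation*}
and then $d_H(r(A),r(B))\le \Lip(r)\,d_H(A,B)$. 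Combining,
\begin{equation*}
d_H(r(A),A)\le (\Lip(r)+1)\,d_H(A,B)\le (\Lip(r)+1)\big(\dist_H(A,X(k))+\varepsilon\big).
\end{equation*}
Letting $\varepsilon\to 0$ gives the claimed inequality \eqref{disp1}.

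There is essentially no obstacle here; the only point requiring a tiny bit of care is that the infimum defining $\dist_H(A,X(k))$ need not be attained, which is why one introduces the slack $\varepsilon$ and passes to the limit at the end rather than working with an exact minimizer. (If one wanted to avoid the limit, one could note that $X(k)$ is closed in $X(n)$ when $X$ is, e.g., proper, but the $\varepsilon$-argument is cleaner and fully general.) Everything else is just the triangle inequality and the two defining properties of a Lipschitz retraction: $r|_{X(k)}=\id$ and $d_H(r(\cdot),r(\cdot))\le \Lip(r)\,d_H(\cdot,\cdot)$. I would present it in three short lines exactly as above.
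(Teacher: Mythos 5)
Your proof is correct and is essentially the paper's argument: both derive $d_H(r(A),A)\le(\Lip(r)+1)\,d_H(A,B)$ for each $B\in X(k)$ from $r(B)=B$, the Lipschitz bound, and the triangle inequality. The only cosmetic difference is that the paper takes the infimum over all $B\in X(k)$ at the end, while you select a near-minimizing $B$ and let $\varepsilon\to 0$; these are interchangeable.
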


\begin{proof} For every $B\in X(k)$ we have $r(B)=B$, hence
\[
d_H(r(A), B) = d_H(r(A), r(B)) \le \Lip(r) d_H(A, B).
\]
By the triangle inequality, $d_H(r(A), A)\le (\Lip(r)+1)d_H(A, B)$. Taking the infimum over $B\in X(k)$ yields~\eqref{disp1}.
\end{proof}

\begin{lemma}\label{retracts-inherit} Let $X$ be a metric space such that there exists a Lipschitz retraction $r\colon X(n)\to X(k)$ for some integers $1\le k < n$. Suppose $Y$ is a metric space such that there exist Lipschitz maps $f\colon X\to Y$ and $g\colon Y\to X$ with the property $f\circ g = \id_Y$. Then there exists a Lipschitz retraction $s \colon Y(n)\to Y(k) $ with $\Lip(s)\le \Lip(f)\Lip(g)\Lip(r)$.
\end{lemma}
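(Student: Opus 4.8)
The plan is to exploit the functoriality of the finite subset space construction with respect to Lipschitz maps. First I would observe that any Lipschitz map $h\colon X\to Y$ induces a map $h_*\colon X(m)\to Y(m)$ by $h_*(A)=\{h(a)\colon a\in A\}$; since $|h_*(A)|\le |A|$, this indeed takes $X(m)$ into $Y(m)$. The key computational point, which I would verify directly from the definition~\eqref{def-dh} of the Hausdorff distance, is that $\Lip(h_*)\le\Lip(h)$: writing two sets in $X(m)$ as tuples $\{x_1,\dots,x_m\}$ and $\{x_1',\dots,x_m'\}$ (with repetitions allowed so that both genuinely have $m$ listed entries), each inner minimum satisfies $\min_j d(h(x_i),h(x_j'))\le \Lip(h)\min_j d(x_i,x_j')$, and symmetrically for the other term in~\eqref{def-dh}, so the whole Hausdorff distance scales by at most $\Lip(h)$.

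With this in hand I would set $s := f_*\circ r\circ g_*$, viewed as the composition $Y(n)\to X(n)\to X(k)\to Y(k)$ of the maps $g_*$, then $r$, then $f_*$. The composition bound $\Lip(s)\le \Lip(f_*)\,\Lip(r)\,\Lip(g_*)\le \Lip(f)\,\Lip(g)\,\Lip(r)$ is then immediate. It remains to check that $s$ is a retraction, i.e.\ that $s(B)=B$ for every $B\in Y(k)$. For such a $B$, the set $g_*(B)=\{g(b)\colon b\in B\}$ has at most $k$ elements, hence lies in $X(k)$ and is fixed by the retraction $r$; applying $f_*$ then gives $\{f(g(b))\colon b\in B\}=\{b\colon b\in B\}=B$ because $f\circ g=\id_Y$. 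Therefore $s(B)=B$, as required.

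I expect no serious obstacle here; the only step needing care is the inequality $\Lip(h_*)\le\Lip(h)$, and within it the verification that sets of cardinality strictly less than $m$ cause no trouble — this is handled by the standard device of padding such a set with repeated entries, which changes neither side of~\eqref{def-dh}.
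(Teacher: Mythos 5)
Your proposal is correct and follows essentially the same route as the paper: both define the induced maps on finite subset spaces, verify $\Lip(h_*)\le\Lip(h)$ from the definition of the Hausdorff distance, set $s=f_*\circ r\circ g_*$, and check the retraction property using $f\circ g=\id_Y$. Your added care about padding sets of smaller cardinality with repeated entries is a fine point the paper leaves implicit.
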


\begin{proof} The map $g$ induces a map $g_n \colon Y(n)\to X(n)$ such that $g_n(A)$ is the image of set $A$ under $g$. From the definition~\eqref{def-dh} of Hausdorff distance it is easy to see that $\Lip(g_n)=\Lip(g)$. Similarly, $f$ induces a map $f_{k}\colon X(k)\to Y(k)$. Let $s = f_{k}\circ r\circ g_n$. By construction, $s$ maps $Y(n)$ to $Y(k)$ and has Lipschitz constant at most $\Lip(f)\Lip(g)\Lip(r)$. If $A\in Y(k)$ then $g_n(A)\in X(k)$, hence $s(A) = f_{k}(g_n(A)) = A$ by the property $f\circ g = \id_Y$. Thus $s$ is a Lipschitz retraction onto $Y(k)$.
\end{proof}

Two useful special cases of Lemma~\ref{retracts-inherit} are: (a) $Y$ is a Lipschitz retract of $X$, with $f$ being the inclusion map; (b) $Y$ is bi-Lipschitz equivalent to $X$, with $g=f^{-1}$.  

\section{Normed spaces: proof of theorem~\ref{thm:normed}} \label{sec:normed}

Let $X$ be a normed space over $\mathbb R$ of dimension at least $2$. The following statement is a special case of~\cite[Proposition G.1]{BenyaminiLindenstrauss} which summarizes the results of John~\cite{John} and Kadets-Snobar~\cite{KS}. 

\begin{lemma}\label{lem:plane} Let $Z$ be a $2$-dimensional subspace of a normed space $X$. Then there exists a linear projection  $P\colon X\to Z$, and a linear isomorphism $T\colon  \mathbb R^2\to Z$ such that $\|P\|\le \sqrt{2}$, $\|T\|\le \sqrt{2}$, and $\|T^{-1}\|\le 1$. 
\end{lemma}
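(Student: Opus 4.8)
\textit{Proof strategy.} The plan is to extract the two assertions from two classical facts about finite-dimensional normed spaces; the dimension being exactly $2$ is what makes the constant $\sqrt2$ work in each.

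For the projection, I would invoke the Kadets--Snobar theorem: every $m$-dimensional subspace of a normed space is the range of a linear projection of norm at most $\sqrt m$. Applying this with $m=2$ produces a linear projection $P\colon X\to Z$ with $\|P\|\le\sqrt2$.

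For the isomorphism (with $\mathbb R^2$ carrying its Euclidean norm), I would apply John's theorem to the unit ball $B_Z$ of $Z$, regarded as a centrally symmetric convex body in the $2$-dimensional vector space $Z$: its maximal-volume inscribed ellipsoid $\mathcal E$ satisfies $\mathcal E\subset B_Z\subset\sqrt2\,\mathcal E$, the dilation factor being $\sqrt{\dim Z}=\sqrt2$ in the symmetric case. Put $E:=\sqrt2\,\mathcal E$, so that $B_Z\subset E\subset\sqrt2\,B_Z$, and let $T\colon\mathbb R^2\to Z$ be a linear isomorphism carrying the Euclidean unit ball $B$ onto $E$ (possible since $E$ is an origin-centered ellipsoid). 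Then $T(B)=E\subset\sqrt2\,B_Z$ gives $\|T\|\le\sqrt2$, while $B_Z\subset E=T(B)$ gives $T^{-1}(B_Z)\subset B$, that is, $\|T^{-1}\|\le1$.

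The substantive content is entirely in John's theorem and the Kadets--Snobar theorem, neither of which has a short self-contained proof, so in the paper I would simply cite them (this is what the reference to \cite{BenyaminiLindenstrauss} together with \cite{John} and \cite{KS} does) rather than reprove them. The only point requiring care is bookkeeping of the direction of each inclusion and of the single rescaling by $\sqrt2$, so that the three bounds come out as $\|P\|\le\sqrt2$, $\|T\|\le\sqrt2$, $\|T^{-1}\|\le1$ and not with reciprocal or squared constants.
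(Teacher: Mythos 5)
Your proposal is correct and matches the paper's treatment: the paper gives no independent proof, simply citing Proposition~G.1 of \cite{BenyaminiLindenstrauss}, which packages exactly the Kadets--Snobar projection bound $\sqrt{m}$ and John's ellipsoid theorem that you invoke, and your bookkeeping of the inclusions $B_Z\subset E\subset\sqrt2\,B_Z$ yields the stated constants correctly.
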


Lemma~\ref{lem:plane} leads us to consider the geometry of finite subsets of $\mathbb R^2$ which is the subject of the following lemma.  

\begin{lemma}\label{lem:distancecircle}
Let $S^1\subset \mathbb R^2$ be the unit circle centered at $0$, equipped with the arclength metric. For any set $A\in S^1(n)$ and any $k\in 1, \dots, n-1$ there exists $B\in S^1(k)$ such that $d_H(A, B)\le \pi(n-1)/(kn)$. 
\end{lemma}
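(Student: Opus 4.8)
The plan is to use the fact that a set $A$ with $|A|\le n$ must occupy a fairly short sub-arc of $S^1$, and then to cover that sub-arc by $k$ equal pieces. One could also attempt a greedy merging of consecutive points of $A$ into $k$ clusters, but the sub-arc covering is cleaner and delivers the sharp constant directly.

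First I would dispose of the trivial case: if $|A|\le k$, take $B=A$, so that $d_H(A,B)=0$. Hence assume $2\le |A|=:m\le n$. Writing $A=\{a_1,\dots,a_m\}$ in cyclic order around $S^1$, observe that the $m$ arcs between cyclically consecutive points of $A$ have total length $2\pi$, so the longest of them has length at least the average $2\pi/m$. Removing that longest gap leaves the shortest closed sub-arc $J\subseteq S^1$ containing $A$, and
\[
\ell(J)\;=\;2\pi-(\text{longest gap})\;\le\;2\pi\Bigl(1-\tfrac1m\Bigr)\;\le\;2\pi\Bigl(1-\tfrac1n\Bigr)\;=\;\frac{2\pi(n-1)}{n},
\]
where the last inequality uses $m\le n$.

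Now subdivide $J$ into $k$ consecutive sub-arcs $J_1,\dots,J_k$ of equal length $\ell(J)/k\le 2\pi(n-1)/(kn)$. For every index $j$ with $J_j\cap A\ne\emptyset$ let $c_j$ be the midpoint of $J_j$, and let $B$ consist of the points $c_j$ so obtained; then $|B|\le k$, so $B\in S^1(k)$. To estimate the Hausdorff distance: each $a\in A$ lies in some $J_j$ meeting $A$, and then $d(a,c_j)\le \tfrac12\ell(J_j)=\ell(J)/(2k)$; conversely each $c_j\in B$ is the midpoint of a sub-arc containing a point of $A$, so $d(c_j,A)\le\ell(J)/(2k)$. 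Therefore $d_H(A,B)\le \ell(J)/(2k)\le \pi(n-1)/(kn)$, as required.

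The only genuinely essential point is the elementary gap estimate in the second paragraph — that $m\le n$ points on $S^1$ leave a gap of length at least $2\pi/m\ge 2\pi/n$ — since this is exactly what upgrades the trivial inclusion $A\subseteq J$ with $\ell(J)<2\pi$ to the quantitative bound $\ell(J)\le 2\pi(n-1)/n$ needed for the stated constant. The remaining steps are routine manipulations with the arclength metric; the one small thing to track is that a point of $A$ falling on a boundary between two sub-arcs should be assigned to just one of them, which is harmless since such a point is within $\ell(J)/(2k)$ of both midpoints anyway.
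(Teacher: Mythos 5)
Your proof is correct, and it reaches the sharp constant by a genuinely different route than the paper. The paper's argument is a symmetrization: it rotates $A$ by the angles $2\pi j/k$, notes that the union $R$ of the $k$ rotated copies has at most $kn$ points and hence omits an open arc of length $2\pi/(kn)$, and then uses the $k$-fold rotational symmetry of $R$ to cover $A$ by $k$ \emph{uniformly spaced} closed arcs of length $2\pi(n-1)/(kn)$, finishing with the same midpoint selection you use. You instead confine $A$ to a single closed sub-arc $J$: the pigeonhole principle applied to the $m\le n$ complementary gaps shows the largest gap has length at least $2\pi/m\ge 2\pi/n$, so $\ell(J)\le 2\pi(n-1)/n$, and subdividing $J$ into $k$ equal pieces gives the bound $\ell(J)/(2k)\le \pi(n-1)/(kn)$. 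Your version is more elementary (no rotation trick) and in fact yields the slightly sharper estimate $\pi(m-1)/(km)$ in terms of the actual cardinality $m=|A|$, which the paper does not need; the paper's covering is spread around the whole circle while yours is concentrated on one arc, but for the purposes of this lemma the two are interchangeable. Your handling of the trivial case $|A|\le k$ and of points landing on subdivision boundaries is also sound.
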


\begin{proof} Let $A_j$ be the result of rotating $A$ by the angle $2\pi j/k$, and let $R = \bigcup_{j=1}^k A_j$. Since $R$ has at most $kn$ points, its complement in $S^1$ contains an open arc of length $2\pi/(kn)$. The $k$-fold symmetry of $R$ implies that it is covered by $k$ uniformly spaced closed arcs $\Gamma_1,\dots, \Gamma_k\subset S^1$ of length 
\[
\frac{2\pi}{k} - \frac{2\pi}{kn}
= \frac{2\pi(n-1)}{kn}.
\]
Therefore, $A\subset \bigcup_{j=1}^k \Gamma_j$.
Let $B$ be the set of midpoints of all arcs $\Gamma_j$ such that $\Gamma_j\cap A$ is nonempty. Then every point of $B$ is within distance at most $\pi (n-1)/(kn)$ of some point of $A$, and vice versa. Since $|B|\le k$, the lemma is proved.
\end{proof}

The estimate in Lemma~\ref{lem:distancecircle} is best possible when $k=n-1$, as one can check using a set $A$ of $n$ equally spaced points. Lemma~\ref{lem:distancecircle} also applies when $S^1$ is equipped with chordal metric, i.e. the restriction of the Euclidean metric on $\mathbb R^2$, because the chordal metric is majorized by arclength. 

\begin{proof}[Proof of Theorem~\ref{thm:normed}] 
Let $Y=\mathbb R^2$ and let $Z$, $P$, $T$ be as in Lemma~\ref{lem:plane}. The mappings $f = T^{-1}\circ P$ and $g = T$ satisfy the assumptions of Lemma~\ref{retracts-inherit}. Therefore, there exists a retraction $s\colon Y(n)\to Y(k)$ with $\Lip(s)\le 2\Lip(r)$. When $X$ is a Hilbert space, this can be improved to $\Lip(s) \le \Lip(r)$ because any two-dimensional subspace $Z\subset X$ is isometric to $\mathbb R^2$ and $P$ can be the orthogonal projection onto $Z$.  Therefore, both cases of Theorem~\ref{thm:normed} will be proved if we show that $\Lip(s) \ge \frac{kn}{\pi(n-1)} - 1$. Suppose, toward a contradiction, that the constant $c := \frac{\pi(n-1)}{kn}(\Lip (s) + 1)$  satisfies $c<1$. 

Let $S^1\subset \mathbb R^2$ be the unit circle centered at $0$. From  Lemmas~\ref{displacement} and~\ref{lem:distancecircle} it follows that $d_H(s(A), A) \le c$ for every $A\in S^1(n)$. Hence $s(A)$
is contained in the set $W = \{x\in \mathbb R^2\colon \|x\|\ge 1-c\}$. The radial projection $f(x) = x/\|x\|$ provides a Lipschitz retraction of $W$ onto $S^1$. Therefore, the mapping $A\mapsto f(s(A))$ is a Lipschitz retraction on $S^{1}(n)$ onto $S^1(k)$. This contradicts Proposition~\ref{NoContRetI}. 
\end{proof}

\section{Metric spaces: proof of theorem~\ref{thm:metric}} \label{sec:metric}

We begin with the special case of retractions between the finite subset spaces of an interval on the real line. The definition of the Hausdorff distance~\eqref{def-dh} implies that for any two nonempty finite sets $A, B\subset \mathbb R$ we have 
\begin{equation}\label{maxmin}
|\max A - \max B| \le d_H(A, B),\qquad 
|\min A - \min B| \le d_H(A, B).
\end{equation} 

\begin{theorem}\label{thm:interval} Let $I = [a,b]\subset \mathbb R$ where $-\infty<a<b<\infty$. If $r\colon I(n)\to I(n-1)$ is a Lipschitz retraction, then 
\begin{equation}\label{interval1}
    \Lip(r) \ge \begin{cases} n-2, & \text{$n$ is even} \\ 
    n-3, & \text{$n$ is odd}
\end{cases}     
\end{equation}
\end{theorem}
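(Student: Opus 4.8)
The plan is to use Lemma~\ref{displacement} to bound how far $r$ can move a finite set, restrict $r$ to a pinned finite subset space, and correct it into a continuous retraction between higher–dimensional dunce hats $\mathcal D^m$ that Corollary~\ref{NoContRetII} forbids. First I would normalize $I=[0,1]$, which leaves $\Lip(r)$ unchanged. If $A\in I(n)$ has exactly $n$ points $t_1<\dots<t_n$ then $\sum_i(t_{i+1}-t_i)\le 1$, so $\delta_n(A)\le 1/(n-1)$; since $[0,1]$ is geodesic, Lemma~\ref{distance} gives $\dist_H(A,I(n-1))\le 1/(2(n-1))$, and then Lemma~\ref{displacement} yields $d_H(r(A),A)\le c$ for all $A\in I(n)$, where $c:=(\Lip(r)+1)/(2(n-1))$. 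I would then argue by contradiction, assuming $c<\tfrac12$, which is precisely the hypothesis $\Lip(r)<n-2$.

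Restrict $r$ to $\mathcal D^{n-2}=I(n,\{0,1\})\subset I(n)$. For $A\in\mathcal D^{n-2}$ we have $\max A=1$, $\min A=0$, so by \eqref{maxmin} $\max r(A)\ge 1-c$ and $\min r(A)\le c$, hence $\max r(A)-\min r(A)\ge 1-2c>0$. Composing with the affine correction
\[
\Phi(B)=\Big\{\frac{t-\min B}{\max B-\min B}\colon t\in B\Big\},
\]
defined whenever $\max B>\min B$, should finish the even case: $\Phi(B)\subset[0,1]$ contains $0$ and $1$ and has $|\Phi(B)|=|B|$; on $\{\max B-\min B\ge 1-2c\}$ the map $B\mapsto\Phi(B)$ is Lipschitz with a constant depending only on $c$ (a routine estimate from \eqref{maxmin}); and $\Phi$ is the identity on $I(n-1,\{0,1\})=\mathcal D^{n-3}$. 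Thus $\Phi\circ(r|_{\mathcal D^{n-2}})$ is a Lipschitz retraction $\mathcal D^{n-2}\to\mathcal D^{n-3}$, and for even $n$ (so $n-2$ is even) Corollary~\ref{NoContRetII} makes this impossible; the contradiction forces $c\ge\tfrac12$, i.e. $\Lip(r)\ge n-2$.

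The odd case is where I expect the main obstacle. By Theorem~\ref{dunce}, $\mathcal D^{n-3}$ is contractible when $n$ is odd, so the retraction $\mathcal D^{n-2}\to\mathcal D^{n-3}$ just constructed is topologically innocuous and the argument has stalled one level too high. The non–contractible target to aim for is $\mathcal D^{n-4}=I(n-2,\{0,1\})$, which has the homotopy type of $S^{n-4}$ for odd $n$: a Lipschitz retraction onto $\mathcal D^{n-4}$ of $\mathcal D^{n-2}$, or of the set $\{B\in I(n-1)\colon\max B\ge 1-c,\ \min B\le c\}$ (which contains $r(\mathcal D^{n-2})$ and deformation retracts onto $\mathcal D^{n-3}$ via $\Phi$), would contradict Theorem~\ref{dunce} by the homology argument from the proof of Corollary~\ref{NoContRetII}. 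The hard part is that one application of $r$ lowers cardinality by only one and the corrections above preserve it, so descending to level $n-2$ appears to need a second, independent cardinality–reducing step, or a reduction of the odd case $I(n)\to I(n-1)$ to the even case $I(n-1)\to I(n-2)$ performed without inflating the Lipschitz constant; supplying such an auxiliary retraction, or the right topological replacement, is the crux, and it explains why the odd bound comes out as $n-3$ rather than $n-2$. This is the step on which I would concentrate.
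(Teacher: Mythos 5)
Your even case is essentially the paper's own argument: normalize to $[0,1]$, use the pigeonhole bound $\delta_n(A)\le 1/(n-1)$ together with Lemmas~\ref{distance} and~\ref{displacement} to show $r$ moves sets in $\mathcal D^{n-2}$ by less than $\tfrac12(1-\text{gap})$, then renormalize affinely to land in $\mathcal D^{n-3}$ and invoke Corollary~\ref{NoContRetII}. That half is complete and correct. But the odd case is a genuine gap: you correctly diagnose that the retraction $\mathcal D^{n-2}\to\mathcal D^{n-3}$ is topologically vacuous when $n$ is odd and that one needs to drop the cardinality by two rather than one, but you do not supply the mechanism, and the alternative you float (reducing $I(n)\to I(n-1)$ to $I(n-1)\to I(n-2)$) does not work, since a retraction at level $n$ gives no retraction at level $n-1$.

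The missing idea in the paper is to manufacture the second cardinality drop by pinning an \emph{isolated} extra point. Work in $I=[0,2]$ with the pinned space $\mathcal E = Y(n,\{0,1,2\})$ where $Y=[0,1]\cup\{2\}$; this $\mathcal E$ is isometric to $\mathcal D^{n-3}$ via $A\mapsto A\cup\{2\}$. For $A\in\mathcal E$, the $n-1$ points in $[0,1]$ force $\delta_n(A)\le 1/(n-2)$, so under the contradiction hypothesis $\Lip(r)<n-3$ the displacement bound confines $r(A)$ to $[0,1+c/2]\cup[2-c/2,2]$ with a definite gap in between, and $r(A)$ must have a point in $[2-c/2,2]$. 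Discarding that point, i.e.\ setting $s(A)=r(A)\cap[0,1+c/2]$, is the second cardinality-reducing step: it yields at most $n-2$ points, and the gap makes $A\mapsto s(A)$ Lipschitz (points cannot jump across the forbidden interval under small Hausdorff perturbations). Your affine renormalization then sends $s(A)$ into $\mathcal D^{n-4}$, producing a Lipschitz retraction $\mathcal D^{n-3}\to\mathcal D^{n-4}$ with $n-3$ even, which Corollary~\ref{NoContRetII} forbids. Without some such device your argument proves only the even case.
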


\begin{proof} The choice of the interval $[a, b]$ does not matter in this theorem. Indeed, if $\phi\colon \mathbb R\to\mathbb R$ is an invertible affine transformation, then $A\mapsto \phi(r(\phi^{-1}(A)))$ is a Lipschitz retraction between the finite subset spaces of $\phi(I)$, with the same Lipschitz constant $\Lip(r)$. Thus, we can choose any convenient interval $I$ in the proof, and we use two different intervals for the two cases that follow. 

 \textbf{Case 1: $n$ is even.} Assume $n\ge 4$ since the statement is trivial for $n=2$. Let $I = [0, 1]$. 
Toward a contradiction, suppose that the quantity $c:= (\Lip(r)+1)/(n-1)$ satisfies $c<1$.  Recall the subsets $\mathcal D^{n-2} \subset I(n)$ defined by~\eqref{def:dunce}. For every set $A\in \mathcal D^{n-2}$ the minimal separation~\eqref{minsep} satisfies $\delta_n(A)\le 1/(n-1)$. From Lemmas~\ref{distance} and~\ref{displacement} it follows that
\[
d_H(r(A), A) \le 
\frac{1}{2(n-1)}(\Lip(r)+1) \le c/2.
\]
By~\eqref{maxmin} we have $\min r(A)\le c/2$ and $\max r(A) \ge 1- c/2$. For $t\in [0, 1]$ let 
\begin{equation}\label{def-fA}
f_A(t) = \frac{t - \min r(A)}{\max r(A) - \min r(A)}.
\end{equation}
Since the denominator in~\eqref{def-fA} is bounded below by $1-c$, the function $f_A$ is $(1-c)^{-1}$-Lipschitz with respect to $t$. It is also Lipschitz continuous with respect to $A$ by virtue of~\eqref{maxmin}. Therefore, the mapping $s(A) := f_A(r(A))$ is Lipschitz continuous on $\mathcal D^{n-2}$. By construction, the set $s(A)$ consists of at most $n-1$ points, $\min s(A) = 0$, and $\max s(A) = 1$. Thus $s(A)\in \mathcal D^{n-3}$. If $|A| < n$ then $r(A) = A$, hence $s(A)=A$. We have proved that $s$ is a Lipschitz retraction of $\mathcal D^{n-2}$ onto $\mathcal D^{n-3}$, which contradicts Corollary~\ref{NoContRetII}. 

\textbf{Case 2: $n$ is odd.} We may assume $n\ge 5$ since the statement is trivial for $n\le 3$.  Let $I=[0, 2]$.  Toward a contradiction, suppose that the quantity $c:= (\Lip(r)+1)/(n-2)$ satisfies $c<1$. Let $Y = [0, 1]\cup \{2\}$ and consider the pinned finite subset space $\mathcal E = Y(n, \{0, 1, 2\})$.  For every set $A\in \mathcal E$, all but one of its points lie in $[0, 1]$. Hence its minimal separation satisfies $\delta_n(A)\le 1/(n-2)$. By Lemmas~\ref{distance} and~\ref{displacement} we have
\[
d_H(r(A), A) \le 
\frac{1}{2(n-2)}(\Lip(r)+1) \le c/2.
\]
Since $\{0, 1, 2\}\subset A$, it follows that $r(A)$ meets each of the intervals $[0, c/2]$, $[1 - c/2, 1 +c/2]$, and $[2-c/2, 2]$. Moreover, $r(A)$ is disjoint from the interval $(1+c/2, 2-c/2)$. 

Let 
\[
s(A) = r(A)\cap [0, 1 + c/2] = r(A)\cap [0, 2-c/2). 
\]
Note that $|s(A)|\le r(A)-1 \le n-2$ since $r(A)$ meets $[2-c/2, 2]$. 
Suppose that $A, B\in \mathcal E$ are such that $\Delta: = d_H(r(A), r(B)) < 1 - c$. By the definition of $d_H$, for every $a\in s(A)$ there exists $b\in r(B)$ such that $|a-b|\le \Delta$. Then $b\le (1+c/2) + \Delta < 2-c/2$, which implies $b\in s(B)$. In conclusion, 
\begin{equation}\label{intersect-dH}
d_H(s(A), s(B)) \le d_H(r(A), r(B))
\end{equation}
whenever the right hand side is less than $1-c$. Since $r$ is Lipschitz continuous, ~\eqref{intersect-dH} shows that $s$ is also Lipschitz continuous. 

For $t\in [0, 1]$ let 
\[ 
f_A(t) = \frac{t - \min s(A)}{\max s(A) - \min s(A)}
\] 
and note that the denominator is bounded below by $1 - c$. As in  Case~1, it follows that $f_A(t)$ is Lipschitz with respect to both $t$ and $A$.  Therefore, the mapping $\sigma(A): = f_A(s(A))$ is Lipschitz continuous on $\mathcal E$. By construction, the set $\sigma(A)$ consists of at most $n-2$ points,
$\min \sigma(A) = 0$, and $\max\sigma(A) = 1$. Thus, $\sigma(A)\in  \mathcal D^{n-4}$. If $A\in \mathcal E$ has fewer than $n$ points, then $r(A) = A$, hence $\sigma(A)=A\cap [0, 1]$. 

The space $\mathcal D^{n-3}$ is isometric to $\mathcal E$ via the map $\iota(A) = A\cup \{2\}$. The previous paragraph shows that the composition $\sigma\circ \iota$ is a Lipschitz retraction of $\mathcal D^{n-3}$ onto $\mathcal D^{n-4}$. Since $n-3$ is even, we have a contradiction with   Corollary~\ref{NoContRetII}. 
\end{proof}

The following statement is an immediate consequence of Theorem~\ref{thm:interval} and Lemma~\ref{retracts-inherit}.

\begin{corollary}\label{cor:metric} Suppose $X$ is a metric space. Fix an integer $n$ such that there exists a Lipschitz retraction $r\colon X(n)\to X(n-1)$. Suppose $I\subset \mathbb R$ is a nondegenerate compact interval and there exist Lipschitz maps $f\colon X\to I$ and $g\colon I\to X$ such that $f\circ g = \id_{I}$. Then 
\[
\Lip(r) \ge \frac{n-3}{\Lip(f) \Lip(g)}.
\]
\end{corollary}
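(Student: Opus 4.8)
The plan is to combine Theorem~\ref{thm:interval} with Lemma~\ref{retracts-inherit}, applied with $Y$ replaced by $I$. Given the hypothesis, we have Lipschitz maps $f\colon X\to I$ and $g\colon I\to X$ with $f\circ g=\id_I$, which are precisely the maps required by Lemma~\ref{retracts-inherit} (with the roles of $X$ and $Y$ in that lemma played by $X$ and $I$ here). Since there exists a Lipschitz retraction $r\colon X(n)\to X(n-1)$, the lemma produces a Lipschitz retraction $s\colon I(n)\to I(n-1)$ with $\Lip(s)\le \Lip(f)\Lip(g)\Lip(r)$.

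Next I would apply Theorem~\ref{thm:interval} to $s$. Because $I$ is a nondegenerate compact interval $[a,b]$ with $-\infty<a<b<\infty$, the theorem applies and gives $\Lip(s)\ge n-3$ (using the weaker of the two cases, which holds whether $n$ is even or odd). Combining this with the bound from the previous paragraph yields $n-3\le \Lip(f)\Lip(g)\Lip(r)$, and rearranging gives $\Lip(r)\ge (n-3)/(\Lip(f)\Lip(g))$, which is the claimed inequality. One should note that $\Lip(f)\Lip(g)\ge 1$ since $\id_I=f\circ g$ forces $\Lip(f)\Lip(g)\ge\Lip(\id_I)=1$, so the right-hand side is well-defined and the inequality is nonvacuous when $n>3$.

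There is essentially no obstacle here: the corollary is a direct packaging of two results already proved. The only points requiring minor care are (i) confirming that the hypotheses of Lemma~\ref{retracts-inherit} are met verbatim — they are, since $f\circ g=\id_I$ is exactly the stated condition — and (ii) making sure we invoke the case of Theorem~\ref{thm:interval} that is valid for all $n$, namely $\Lip(s)\ge n-3$. Since the statement of the corollary already advertises itself as ``an immediate consequence,'' the proof can be written in two or three sentences.

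\begin{proof}
By hypothesis there exist Lipschitz maps $f\colon X\to I$ and $g\colon I\to X$ with $f\circ g=\id_I$. Applying Lemma~\ref{retracts-inherit} with $I$ in place of $Y$ and $k=n-1$, we obtain a Lipschitz retraction $s\colon I(n)\to I(n-1)$ with $\Lip(s)\le \Lip(f)\Lip(g)\Lip(r)$. Since $I$ is a nondegenerate compact interval, Theorem~\ref{thm:interval} gives $\Lip(s)\ge n-3$ (this bound holds for both parities of $n$). Combining the two inequalities yields $n-3\le \Lip(f)\Lip(g)\Lip(r)$, which is the desired estimate.
\end{proof}
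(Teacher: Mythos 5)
Your proposal is correct and matches the paper exactly: the paper states the corollary as ``an immediate consequence of Theorem~\ref{thm:interval} and Lemma~\ref{retracts-inherit}'' with precisely this combination in mind, using the bound $n-3$ valid for both parities of $n$. Nothing is missing.
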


\begin{proof}[Proof of Theorem~\ref{thm:metric}] 
Suppose $r\colon X(n)\to X(n-1)$ is a Lipschitz retraction.

\textbf{Case 1: $X$ is a normed space.} Let $I = [0, 1]$. Pick a unit vector $u\in X$ and its norming functional $\varphi\in X^*$, that is, a linear functional $\varphi\colon X\to \mathbb R$ such that $\|\varphi\|_{X^*} = 1$ and $\varphi(u)=1$. The existence of such $\varphi$ follows from the Hahn-Banach theorem. Define $g\colon I\to X$ by $g(t)=tu$, and $f\colon X\to I$ by 
$f(x) = \min(\max(\varphi(x), 0), 1)$. 
Note that both $f$ and $g$ are $1$-Lipschitz and $f\circ g = \id_I$. By Corollary~\ref{cor:metric} we have $\Lip(r)\ge n-3$. 

\textbf{Case 2: $X$ is an Hadamard space.}
Pick any two distinct points $p, q\in X$ and let $I = [0, d_X(p, q)]$. Since Hadamard spaces are geodesic, there exists an isometric embedding $g\colon I\to X$, namely a reparametrized geodesic connecting $p$ to $q$. Since $g(I)$ is a closed convex subset of $X$, the nearest-point projection onto $X$ is a $1$-Lipschitz map~\cite[Theorem 2.1.12]{Bacak}. Let $f$ be the composition of this projection with $g^{-1}$. Since $\Lip(f)=1=\Lip(g)$,  Corollary~\ref{cor:metric} yields $\Lip(r)\ge n-3$ as claimed. 
\end{proof}

\bibliography{references.bib} 
\bibliographystyle{plain} 

\end{document}